\newtheorem{theorem}{Theorem}
\newtheorem{corollary}{Corollary}
\newtheorem{proposition}{Proposition}
\author[M.~Archibald, A.~Blecher, C.~Brennan, A.~Knopfmacher, S.~Wagner and M.~D.~Ward]{Margaret Archibald\affiliationmark{1}\thanks{This material is based upon work supported by the National Research Foundation under grant number 89147.}
\and Aubrey Blecher\affiliationmark{1}
\and Charlotte Brennan\affiliationmark{1}\thanks{This material is based upon work supported by the National Research Foundation under grant number 86329.}\\
\and Arnold Knopfmacher\affiliationmark{1}\thanks{This material is based upon work supported by the National Research Foundation under grant number 81021.}
\and Stephan Wagner\affiliationmark{2,3}\thanks{Supported by the Knut and Alice Wallenberg Foundation}
\and Mark Daniel Ward\affiliationmark{4}\thanks{M.D. Ward's research is supported by National Science Foundation (NSF) grants DMS-1246818, CCF-0939370, and OAC-2005632, by the Foundation for Food and Agriculture Research (FFAR) grant 534662, by the National Institute of Food and Agriculture (NIFA) grants 2019-67032-29077 and 2020-70003-32299, by the Society Of Actuaries grant 19111857, by Cummins Inc. grant 20067847, by Sandia National Laboratories, and by Gro Master.}}
\title[The number of distinct adjacent pairs in geometrically distributed words]{The number of distinct adjacent pairs in geometrically distributed words}
\affiliation{
  University of the Witwatersrand, Johannesburg, RSA\\
  Uppsala Universitet, Sweden\\
  Stellenbosch University, RSA\\
  Department of Statistics, Purdue University, USA}
\keywords{geometric random variables, pairs, generating function, asymptotics}
\begin{document}
\publicationdetails{22}{2020}{4}{10}{5686}
\maketitle
\begin{abstract}
A sequence of geometric random variables of length $n$ is a sequence of $n$ independent and identically distributed geometric random variables ($\Gamma_1, \Gamma_2,  \dots, \Gamma_n$) where $\mathbb {P}(\Gamma_j=i)=pq^{i-1}$ for $1~\leq~j~\leq~n$ with $p+q=1.$
We study the number of distinct adjacent two letter patterns in such sequences. Initially we directly count the number of distinct pairs in words of short length. Because of the rapid growth of the number of word patterns we change our approach to this problem by obtaining an expression for the expected number of distinct pairs in words of length $n$. We also obtain the asymptotics for the expected number as $n \to \infty$.
\end{abstract}

\section{Introduction}
\label{sec:intro}
A sequence of geometric random variables of length $n$ is a sequence of $n$ independent and identically distributed geometric random variables ($\Gamma_1, \Gamma_2,  \dots, \Gamma_n$) where
\[\mathbb {P}(\Gamma_j=i)=pq^{i-1}\quad\textrm{for}\quad 1~\leq~j~\leq~n\quad\textrm{with}\quad p+q=1.\]
The smaller the value of $q$, the greater the prevalence of smaller numbers in the sequence.
So $p$ is the probability of obtaining the letter 1, where $0\le p\le 1$, and $pq^{i-1}$ is the probability of obtaining the letter $i$ in any given position of the word. In this paper a {\em word} will mean a {\em geometrically distributed random word}. This probabilistic model is the usual Bernoulli memoryless model with infinite alphabet and a geometrical distribution on letters.

The combinatorial interest in this topic can be seen in many recent references on sequences of geometric random variables, see for example \cite{ABBKP, akp06, CP, FP, KW, KP96, L, LPW, lopr06, LW,  Prod96, P}. 

Our study of pairs also has potential application in information theory relating to more efficient storage of digital information. The first \cite{gulomb} is related to the design of optimal codes for geometrically distributed variables. The second \cite{BCSV} considers optimal prefix codes for pairs of geometrically distributed variables. The third
\cite{MSW, WSS} consider a slightly different (two-sided) distribution for symbols which is used in applications.

In this paper, given a word of length $n$, we study the number of distinct adjacent two letter patterns (or pairs). This is a generalisation of the problem of distinct values previously studied in \cite{akp06} by two of the current authors and subsequently in \cite{LPW}. A pair is made up of two consecutive letters $\Gamma_i \Gamma_{i+1}$ for $i=1$ to $n-1$. So for example the word $\Gamma = 12412413$ is a sequence of geometric random variables of length $n=8$. It is made up with four distinct letters 1 to 4 and it has four distinct pairs namely: 12, 24, 41, and 13.

In Section \ref{DirCount} we count the expected number of distinct pairs in short words. Thereafter, in Section \ref{ExpNum}, we tackle the problem of the expected number of distinct pairs in a randomly generated word of length $n$. Here is an outline of our approach: We utilize the methodology of the paper by Bassino, Cl\'{e}ment and Nicod\`{e}me, see \cite{BCN}. The full strength of this paper is not needed, because (in this case) we are only studying ``reduced" sets of patterns, i.e., we are always analyzing patterns of length two. These methods are available in many other sources (for example the auto-correlation polynomial method used by Guibas and Odlyzko \cite{guod}), but the Bassino (et al.) paper has some nice examples that make the material accessible to people who are new to this methodology.

\section{A direct count for the number of distinct pairs in small words}\label{DirCount}
We illustrate a more intuitive approach in the case of small words. The patterns of length four in the first column of Table~1 correspond to partitions of the set $\{1,2,3,4\}$ into blocks. For example, $aabc$ corresponds to the set partition $12/3/4$. The number of set partitions  is enumerated by the well-known Bell numbers (see \cite{Stan}). In view of the rapid growth of the Bell numbers, this is not a practical approach for large $n$ but has the advantage of giving insight into the use of the Bell numbers  as a way of generating the solution to the problem.

\subsection{Words of length four}\label{length4}
There are precisely 15 patterns of length four, these are shown in the first column of Table 1.

\begin{table}[htp]\renewcommand{\arraystretch}{2.2}
\begin{center}
\begin {tabular}{|c|c|c|c|}\hline
The patterns&Number of&Probability of&Type\\
&distinct pairs&this pattern&\\\hline
$aaaa$&1&$\frac{p^4}{q^4}\sum_{a} q^{4a}$&A\\\hline
$abab$&2&$\frac{p^4}{q^4}\sum_{a,b} q^{2a+2b}$&B\\\hline
$aaab$&2&$\frac{p^4}{q^4}\sum_{a,b} q^{3a+b}$&C\\\hline
$abbb$&2&$\frac{p^4}{q^4}\sum_{a,b} q^{a+3b}$&C\\\hline
$abaa, aaba$&3&$\frac{p^4}{q^4}\sum_{a,b} q^{3a+b}$&C\\\hline
$aabb, abba$&3&$\frac{p^4}{q^4}\sum_{a,b} q^{2a+2b}$&B\\\hline
$aabc, abac, abca$&3&$\frac{p^4}{q^4}\sum_{a, b, c} q^{2a+b+c}$&D\\\hline
$abbc, abcb$&3&$\frac{p^4}{q^4}\sum_{a, b, c} q^{a+2b+c}$&D\\\hline
$abcc$&3&$\frac{p^4}{q^4}\sum_{a, b, c} q^{a+b+2c}$&D\\\hline
$abcd$&3&$\frac{p^4}{q^4}\sum_{a, b, c, d} q^{a+b+c+d}$&E\\\hline
\end{tabular}\vspace{0.5cm}\caption{Probabilities for all the patterns in a four letter word}
\end{center}
\end{table}
They correspond to the number of restricted  growth functions of length four. This is equal to the number of set partitions of length four for the set $\{1, 2, 3, 4\}$, i.e., the Bell number $B(4)=15$, see \cite{Stan}. The sum of the probabilities of obtaining these patterns is 1 as expected. In the last column, we have split the 15 words into five types: A to E. The type depends on the number of distinct letters making up the word and their multiplicity. For example words of type C are made up of two distinct letters: three letters of one kind and one of the other kind. Words with the same type occur with equal probability.

\textbf{Notation}: We write $\sum_{a,b,c,d,e}$ to mean the sum over all values of $a, b, c, d$ and $e$ where these values are all distinct from each other.

Now define $S_A(a)$, $S_B(a,b)$, $S_C(a,b)$, $S_D(a,b,c)$ and $S_E(a,b,c,d)$ to be the probability of occurrence of words of type A, B, C, D and E respectively. Thus
\begin{align*}
S_E(a,b,c,d):&=\frac{p^4}{q^4}\sum_{a, b, c, d} q^{a+b+c+d}=4!\,\frac{p^4}{q^4}\,\sum_{a=1}^\infty \sum_{b>a} \sum_{c>b} \sum_{d>c} q^{a+b+c+d}\\
&=\frac{24q^{6}}{(1+q)^2(1+q^2)(1+q+q^2)}
\end{align*}
For words of type $D$, we use the inclusion/exclusion principle to obtain the probability
\begin{align*}&S_D(a,b,c):=\frac{p^4}{q^4}\,\sum_{a, b, c} q^{2a+b+c}\\
&=\frac{p^4}{q^4}\left(\sum_{a=1}^\infty \sum_{b=1}^\infty \sum_{c=1}^\infty q^{2a+b+c}-\sum_{a=1}^\infty \sum_{b=1}^ \infty q^{2a+b+b}-\sum_{a=1}^\infty \sum_{b=1}^ \infty q^{2a+b+a}-\sum_{a=1}^\infty \sum_{c=1}^\infty q^{2a+a+c}+2 \sum_{a=1}^\infty q^{4a}\right)\\
&=\frac{2pq^3(1+2q+3q^2)}{(1+q)^2(1+q^2)(1+q+q^2)}.\end{align*}
Similarly
\begin{align}&S_B(a,b):=\frac{p^4}{q^4}\,\sum_{a, b} q^{2a+2b}=\frac{2p^2q^2}{(1+q)^2(1+q^2)}\label{SB}\\
&S_C(a,b):=\frac{p^4}{q^4}\,\sum_{a, b} q^{3a+b}=\frac{p^2q(1+q+2q^2)}{(1+q)(1+q^2)(1+q+q^2)}\label{SC}\end{align}
and
\begin{align*}&S_A(a):=\frac{p^4}{q^4}\,\sum_{a\geq 1} q^{4a}=\frac{p^3}{(1+q)(1+q^2)}.\end{align*}
Let the probability of obtaining $\ell$ distinct pairs in a random word of length four be $P_4(\ell)$. By reading off from Table 1 we have
\[P_4(1)=S_A,\]
\[P_4(2)=S_B+2S_C,\]
\[P_4(3)=2S_B+2S_C+6S_D+S_E.\]

By multiplying the number of distinct pairs (the numbers in column two) and the corresponding probabilities (the expressions in column three) and summing  over all cases, we obtain the expected number of distinct pairs in four letter words. We will denote this by $E(4)$. Thus
\begin{align}
E(4)&=S_A+2(S_B+2S_C)+3(2S_B+2S_C+6S_D+S_E)\notag\\
&=\frac{1+9q+15q^2+20q^3+17q^4+11q^5-q^6}{(1+q)^2(1+q^2)(1+q+q^2)}\label{F4}.
\end{align}

A plot of this expression can be seen in Figure~1.

\begin{figure}
\begin{center}
\includegraphics[width=0.8\textwidth]{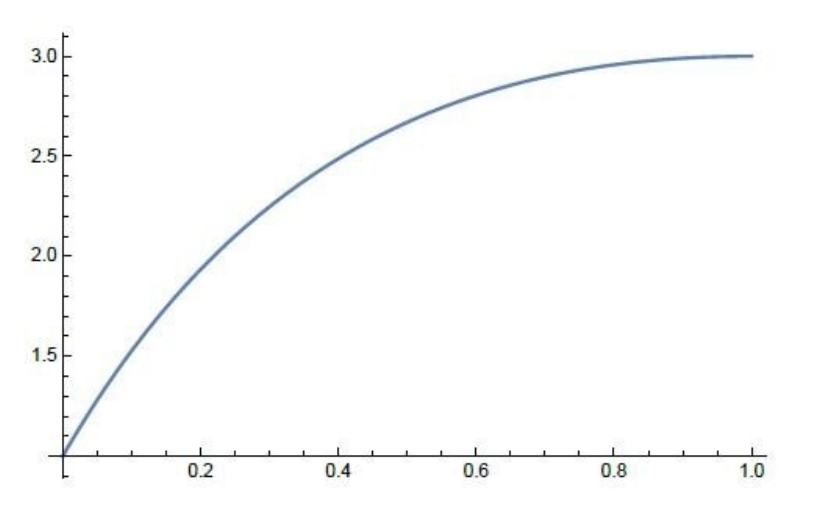}
\caption{Plot of $E(4)$ for $q=0$ to $q=1$.}\label{E4pic}
\end{center}
\end{figure}

\section{The expected number of distinct pairs}\label{ExpNum}
In this section, we use the methodology and notation from \cite{BCN}. An interested reader may wish to read this paper for clarity on the concepts used. We find an expression for the probability of obtaining a distinct pair. We split this problem into two cases: i) the case $ii$ of two equal letters, and ii) the $ij$ case of two distinct letters where $i \neq j$.

Let $P_{i,i}(n)$ and $P_{i,j}(n)$ be the probabilities of getting words of length $n$ which have a pair $ii$ and $ij$ (where $i \neq j$) respectively somewhere in position $1$ up to position $n-1$.

We also use $A(z)$ to denote the generating function for the alphabet of a set of words, for geometric words the probabilities sum up to one, so $A(z)$ is just $z$.

We have the following results:
\subsection{Analysis of distinct (adjacent) two letter pattern $ij$ with $i \neq j$}
We fix $i \neq j$ and analyze the occurrences of the pattern $ij$; then the only ``cluster" (to use the terminology in \cite{BCN}) is $ij$ itself. So the generating function $\xi(z,t)$ of the clusters $C$ becomes (see (6) in \cite{BCN}):
\[\xi(z,t)=pq^{i-1}pq^{j-1}tz^2.\]
Hence the generating function of the {\em decorated texts} is
\[T(z,t)=\frac{1}{1-A(z)-\xi(z,t)}=\frac{1}{1-z-pq^{i-1}pq^{j-1}tz^2},\]
and therefore using the  analytic inclusion-exclusion principle, the probability generating function of occurrences of $ij$ (with $z$ marking the length of the words, and $x$ marking the number of occurrences) is
\[F(z,x)=T(z,x-1)=\frac{1}{1-z-pq^{i-1}pq^{j-1}(x-1)z^2}.\]
It follows that the probability generating function of the words with {\em zero occurrences} of the pattern $ij$ is the $x^0$ term (which can be extracted by substituting $x=0$), and we obtain
\[[x^0]F(z,x)=F(z,0)=\frac{1}{1-z+pq^{i-1}pq^{j-1}z^2}.\]
So, finally the probability generating function of the words with {\em at least one occurrence} of $ij$ is
\begin{equation} \frac{1}{1-z}-\frac{1}{1-z+pq^{i-1}pq^{j-1}z^2}.\label{at least 1 ij}\end{equation}

\subsection{Analysis of distinct (adjacent) two letter pattern $ii$}
Now suppose that, instead, we fix $i$ and we analyze the occurrences of the pattern $ii$. The clusters all have the form $ii\cdots i$, i.e., they are all words that consist of two or more consecutive occurrences of $i$. So $\xi(z,t)$ of the set of clusters $C$ becomes
\[\xi(z,t)=\frac{pq^{i-1}pq^{i-1}tz^2}{1-pq^{i-1}tz}\]
and then $T(z,t)=\frac{1}{1-A(z)-\xi(z,t)}$ and the analysis is exactly the same as above, leading us (finally) to see that the probability generating function of the words with {\em at least one occurrence} of $ii$ is \
\begin{equation} \frac{1}{1-z}-\frac{1}{1-z+\frac{pq^{i-1}pq^{i-1}z^2}{1+pq^{i-1}z}}.\label{at least 1 ii}\end{equation}

\subsection{The expected number of distinct (adjacent) two letter patterns}
Now we garner the benefits of this analysis.
\begin{theorem}\label{Th1}
The generating function of the expected number of distinct (adjacent) two letter patterns is then
\[ Q(z):=\sum_{i=1}^\infty \sum_{j \neq i} \left( \frac{1}{1-z}-\frac{1}{1-z+pq^{i-1}pq^{j-1}z^2}\right)+\sum_{i=1}^\infty \left( \frac{1}{1-z}-\frac{1}{1-z+\frac{pq^{i-1}pq^{i-1}z^2}{1+pq^{i-1}z}}\right).\]
\end{theorem}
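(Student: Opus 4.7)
The plan is to appeal to linearity of expectation and then translate the resulting infinite sum of probabilities into a sum of generating functions. For each ordered pair $(i,j)$ with $i,j\geq 1$, let $\mathbf{1}_{i,j}^{(n)}$ denote the indicator that the two-letter pattern $ij$ occurs at least once among the $n-1$ adjacent positions of a random geometric word of length $n$. Then the number of \emph{distinct} adjacent two-letter patterns in such a word is exactly $\sum_{i,j\geq 1}\mathbf{1}_{i,j}^{(n)}$, so by linearity of expectation,
\[
\E[\text{number of distinct pairs in a length-}n\text{ word}] \;=\; \sum_{i,j\geq 1} P_{i,j}(n),
\]
where $P_{i,j}(n):=\P\bigl(\mathbf{1}_{i,j}^{(n)}=1\bigr)$ is precisely the probability introduced at the start of Section~\ref{ExpNum}.

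Multiplying by $z^n$ and summing over $n\geq 0$, the left-hand side becomes $Q(z)$. After interchanging the order of summation on the right, one obtains
\[
Q(z)=\sum_{i,j\geq 1}\;\sum_{n\geq 0}P_{i,j}(n)\,z^n,
\]
and the inner generating function is exactly the closed form derived in the preceding subsection: equation~\eqref{at least 1 ij} when $i\neq j$ and equation~\eqref{at least 1 ii} when $i=j$. Splitting the outer double sum as $\sum_{i\neq j}+\sum_{i\geq 1}$ (the diagonal contributing the $ii$ terms) and substituting these two formulas yields the expression claimed in Theorem~\ref{Th1} term for term.

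The one genuine step of rigor is justifying the interchange of summations, and this is the only place where I expect any real work. For $z\in(0,1)$, a union bound over the $n-1$ possible positions of the pattern gives $P_{i,j}(n)\leq(n-1)\,p^2q^{i+j-2}$, which is geometrically summable in $(i,j)$ for each fixed $n$; Tonelli's theorem then authorizes the swap of the double sum with the sum over $n$. Everything else in the argument is bookkeeping: once the swap is legitimate, the theorem follows by direct substitution of \eqref{at least 1 ij} and \eqref{at least 1 ii} into the two pieces of the decomposition.
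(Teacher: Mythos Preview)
Your proposal is correct and follows essentially the same approach as the paper: the paper derives \eqref{at least 1 ij} and \eqref{at least 1 ii} and then simply states Theorem~\ref{Th1} as the consequence of summing these over all pairs, implicitly invoking linearity of expectation. You make that linearity step explicit and add a Tonelli justification for the swap of summations, which the paper omits; otherwise the arguments coincide.
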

For example, the result~\eqref{F4} is a natural corollary, extracting the coefficients of $z^4$:
\[ [z^4] Q(z)=\frac{1+9q+15q^2+20q^3+17q^4+11q^5-q^6}{(1+q)^2(1+q^2)(1+q+q^2)}.\]

We now prove general formulae for the probabilities $P_{i,i}(n)$ and $P_{i,j}(n)$. To simplify the equations we will use the following notation: $a_i:=pq^{i-1}$, $b_i:=\sqrt{1+2a_i-3a_i^2}$, $c_{ij}:=p^2q^{i+j-2}$, and $d_{ij}:=\sqrt{1-4c}$.

\begin{proposition}\label{prop1}The probability of having two adjacent identical letters $ii$ is
\begin{equation*}
P_{i,i}(n)=1+\frac{2^{-n-1} \left(\left(1-b_i+a_i\right) \left(1-b_i-a_i\right)^n-\left(1+b_i+a_i\right) \left(1+b_i-a_i\right)^n\right)}{b_i}.
\end{equation*}
\end{proposition}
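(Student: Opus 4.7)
The plan is to extract $[z^n]$ directly from the ``at least one occurrence of $ii$'' generating function (\ref{at least 1 ii}), which with $a = pq^{i-1}$ reads
\[G(z) \;:=\; \frac{1}{1-z} \;-\; \frac{1}{1 - z + \frac{a^2 z^2}{1+az}}.\]
First I would clear the nested fraction by multiplying through by $1+az$, so that the second summand becomes
\[\frac{1+az}{(1-z)(1+az) + a^2 z^2} \;=\; \frac{1+az}{1 - (1-a)z - a(1-a)z^2}.\]

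Next I would factor the quadratic denominator as $(1 - \rho_1 z)(1 - \rho_2 z)$. Vieta's formulas give $\rho_1 + \rho_2 = 1-a$ and $\rho_1 \rho_2 = -a(1-a)$, so $\rho$ solves $\rho^2 - (1-a)\rho - a(1-a) = 0$, whose discriminant is $(1-a)^2 + 4a(1-a) = 1 + 2a - 3a^2 = b^2$. Hence $\rho_{1,2} = (1 - a \pm b)/2$, which is precisely where the shorthand $b$ enters.

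A standard partial fraction decomposition then writes
\[\frac{1+az}{(1-\rho_1 z)(1-\rho_2 z)} \;=\; \frac{A}{1-\rho_1 z} + \frac{B}{1-\rho_2 z},\]
with $A = (\rho_1+a)/(\rho_1 - \rho_2)$ and $B = (\rho_2+a)/(\rho_2 - \rho_1)$, obtained by the usual residue trick at $z = 1/\rho_j$. Plugging in the explicit $\rho_{1,2}$ and using $\rho_1 - \rho_2 = b$ gives $A = (1+a+b)/(2b)$ and $B = -(1+a-b)/(2b)$.

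Finally, reading off
\[P_{i,i}(n) \;=\; [z^n] G(z) \;=\; 1 \;-\; A\rho_1^n \;-\; B\rho_2^n\]
and combining the factor $2^{-n}$ from $\rho_{1,2}^n$ with the $1/(2b)$ inside $A$ and $B$ produces exactly the formula claimed in the proposition. The main obstacle is really only careful sign bookkeeping; a sanity check at $n=0$ and $n=1$ (both yielding $P_{i,i}=0$) and at $n=2$ (yielding $a^2$, the probability of the length-two word $ii$) is well worth running to catch any slips before committing to the final form.
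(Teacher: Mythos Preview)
Your proposal is correct and is essentially the same approach as the paper's own proof: both start from the generating function (\ref{at least 1 ii}), clear the nested fraction, and perform a partial-fraction decomposition of the resulting rational function to read off $[z^n]$. You simply spell out the algebra (Vieta, discriminant, residue computation of $A$ and $B$) that the paper compresses into the single phrase ``using partial fractions, this becomes\ldots''.
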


\begin{proposition}\label{prop2}The probability of having two adjacent distinct letters $ij$ is
\[P_{i,j}(n)=1+\frac{1-d_{ij}}{2d_{ij}}\left( \frac{2c_{ij}}{1+d_{ij}}\right)^n-\frac{1+d_{ij}}{2d_{ij}}\left( \frac{2c_{ij}}{1-d_{ij}}\right)^n.\]
\end{proposition}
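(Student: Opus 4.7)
The plan is to extract coefficients from the generating function for ``at least one occurrence'' of the pattern $ij$ derived in Subsection 3.1, namely
\[
G_{i,j}(z) \;=\; \frac{1}{1-z}-\frac{1}{1-z+cz^2},
\]
where $c=p^2q^{i+j-2}$, and to apply partial fractions to extract $P_{i,j}(n)=[z^n]G_{i,j}(z)$.

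First I would observe that $[z^n]\tfrac{1}{1-z}=1$, so the task reduces to computing $[z^n]\tfrac{1}{1-z+cz^2}$. Factor the denominator as $1-z+cz^2=(1-\alpha z)(1-\beta z)$, where $\alpha,\beta=\tfrac{1\pm d}{2}$ with $d=\sqrt{1-4c}$; these satisfy $\alpha+\beta=1$ and $\alpha\beta=c$, so that $\alpha-\beta=d$. A standard partial fraction decomposition then gives
\[
\frac{1}{1-z+cz^2}=\frac{1}{\alpha-\beta}\!\left(\frac{\alpha}{1-\alpha z}-\frac{\beta}{1-\beta z}\right),
\]
whose $n$-th coefficient is $\tfrac{\alpha^{n+1}-\beta^{n+1}}{d}$. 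Consequently
\[
P_{i,j}(n)=1-\frac{1}{d}\!\left(\Bigl(\tfrac{1+d}{2}\Bigr)^{n+1}-\Bigl(\tfrac{1-d}{2}\Bigr)^{n+1}\right).
\]

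The only remaining work is to rewrite this in the form stated in the proposition. I would use the identity $1-d^2=4c$, which gives $\tfrac{1-d}{2}=\tfrac{2c}{1+d}$ and $\tfrac{1+d}{2}=\tfrac{2c}{1-d}$. Substituting these into the expression above converts the $n+1$-st powers into $n$-th powers multiplied by the prefactors $\tfrac{1-d}{2d}$ and $\tfrac{1+d}{2d}$, yielding exactly
\[
P_{i,j}(n)=1+\frac{1-d}{2d}\!\left(\frac{2c}{1+d}\right)^n-\frac{1+d}{2d}\!\left(\frac{2c}{1-d}\right)^n.
\]

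There is no real obstacle here: the proof is a two-step coefficient extraction followed by an elementary algebraic rewriting. The only point requiring care is bookkeeping with the two forms of $\alpha$ and $\beta$ so that the signs and the exponents $n$ versus $n+1$ match up; the identity $1-d^2=4c$ is what makes the conversion between the ``$(\tfrac{1\pm d}{2})^{n+1}$'' form (natural from partial fractions) and the ``$(\tfrac{2c}{1\pm d})^{n}$'' form (as stated) clean.
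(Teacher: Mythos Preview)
Your proof is correct and follows essentially the same route as the paper: start from the generating function $\frac{1}{1-z}-\frac{1}{1-z+cz^2}$, apply partial fractions, and extract the coefficient of $z^n$. The only cosmetic difference is that the paper writes the reciprocal roots directly as $\frac{2c}{1\pm d}$, whereas you write them as $\frac{1\pm d}{2}$ and then invoke $1-d^2=4c$ to convert; these are the two equivalent forms of the quadratic formula and lead to the same answer.
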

\begin{proof}
We prove both Propositions \ref{prop1} and \ref{prop2}. From \eqref{at least 1 ii}, we see that the probability generating function of the words with {\em at least one occurrence} of $ii$ is
\[\frac{1}{1-z}-\frac{1}{1-z+\frac{pq^{i-1}pq^{i-1}z^2}{1+pq^{i-1}z}},\]
or, using the notation $a_i:=pq^{i-1}$, and $b_i:=\sqrt{1+2a-3a^2}$,  and using partial fractions, this becomes
\[\frac{1}{1-z}+\frac{(1-b_i+a_i)/(2b_i)}{1-z\frac{1-b_i-a_i}{2}}-\frac{(1+b_i+a_i)/(2b_i)}{1-z\frac{1+b_i-a_i}{2}}.\]
Hence, by extracting the coefficient of $z^n$ in the above expression we obtain

\[ P_{i,i}(n)=1+\frac{1-b_i+a_i}{2b_i} \left( \frac{1-b_i-a_i}{2}\right)^n-\frac{1+b_i+a_i}{2b_i}\left( \frac{1+b_i-a_i}{2}\right)^n.\]

Recall, from \eqref{at least 1 ij}, the probability generating function of the words with {\em at least one occurrence} of $ij$ is
\[\frac{1}{1-z}-\frac{1}{1-z+pq^{i-1}pq^{j-1}z^2},\]
or, using the notation $c_{ij}:=p^2q^{i+j-2}$, and $d_{ij}:=\sqrt{1-4c}$,  and using partial fractions, this becomes
\[\frac{1}{1-z}-\frac{1}{1-z+c_{ij}z^2}=\frac{1}{1-z}+\frac{(1-d_{ij})/(2d_{ij})}{1-2c_{ij}z/(1+d_{ij})} - \frac{(1+d_{ij})/(2d_{ij})}{1-2c_{ij}z/(1-d_{ij})}.\]
So again, extracting the coefficient of $z^n$ yields
\[P_{i,j}(n)=1+\frac{1-d_{ij}}{2d_{ij}}\left( \frac{2c_{ij}}{1+d_{ij}}\right)^n-\frac{1+d_{ij}}{2d_{ij}}\left( \frac{2c_{ij}}{1-d_{ij}}\right)^n.\]

\end{proof}

Denote the expected number of distinct pairs in a word of length $n$ by $E(n)$. It is obtained by summing $P_{i,i}(n)$ over all $i$ and $P_{i,j}(n)$ over all $i$ and $j$ for $i \neq j$. Thus
\[E(n)=\sum_{i=1}^\infty P_{i,i}(n) +\sum_{i,j\, \textrm{distinct}} P_{i,j}(n).\]
We write $S(n)$ for the first sum, and $T(n)$ for the second. In the following sections, we derive both exact and asymptotic formulae for these two quantities.

\section{Counting pairs arising from adjacent equal letters}\label{CountEqual}
In this section we look more closely at the $ii$ case first seen in Section \ref{ExpNum}.
As before $a_i:=(1-q)q^{i-1}$ and $b_i:=\sqrt{1+2a_i-3a_i^2}$ and the complete sum is
\begin{equation}S(n) =\sum_{i=1}^\infty \left( 1+\frac{(1-a_i-b_i)^n(1+a_i-b_i)-(1-a_i+b_i)^n(1+a_i+b_i)}{b_i 2^{n+1}}\right).\label{Sn1}\end{equation}
Our aim is to express $S(n)$, for general $n$, as a rational function of $q$, as done in \eqref{F4} for the special case $n=4$.

For a fixed $i$, the expected number of two equal adjacent letters $ii$ is
\begin{align}
&[z^n]\left(\frac{1}{1-z}-\frac{1}{1-z+\frac{(pq^{i-1}z)^2}{1+pq^{i-1}z}}\right)\notag\\
&=[z^n]\left( \frac{1}{1-z} - \frac{1+pq^{i-1}z}{1-(1-pq^{i-1})z(1+pq^{i-1}z)}  \right)\notag\\
&=[z^n]\left(\frac{1}{1-z}-\sum_{s=0}^\infty  (1-pq^{i-1})^sz^s(1+pq^{i-1}z)^{s+1}  \right)\label{coeff ii}
\end{align}
where
\[ [z^n]\sum_{s=0}^\infty  (1-pq^{i-1})^sz^s(1+pq^{i-1}z)^{s+1}=\sum_{s=0}^n (1-pq^{i-1})^s {s+1 \choose n-s}(pq^{i-1})^{n-s}.\]
Thus \eqref{coeff ii} becomes

\[ 1-\sum_{s=0}^n (1-pq^{i-1})^s {s+1 \choose n-s}(pq^{i-1})^{n-s} = 1 - \sum_{s=0}^n \sum_{k=0}^s (-1)^{s-k} {s \choose k}{s+1 \choose n-s} (pq^{i-1})^{n-k}.\]

We interchange the order of summation and then substitute $n-k = r$ and $n-s = t$ to obtain
\[1 - \sum_{k=0}^n \sum_{s=k}^n (-1)^{s-k} {s \choose k}{s+1 \choose n-s} (pq^{i-1})^{n-k}
= 1 - \sum_{r=0}^n \sum_{t=0}^r (-1)^{r-t} {n-t \choose n-r}{n-t+1 \choose t} (pq^{i-1})^r.\]
When $r=0$, the inner sum evaluates to $1$, so this reduces to
\[\sum_{r=1}^n \sum_{t=0}^r (-1)^{r-t+1} {n-t \choose n-r}{n-t+1 \choose t} (pq^{i-1})^r.\]
These calculations were done for a fixed $i$, thus we need to sum over all $i$'s, resulting in
\[\sum_{i=1}^\infty \sum_{r=1}^n \sum_{t=0}^r (-1)^{r-t+1} {n-t \choose n-r}{n-t+1 \choose t} (pq^{i-1})^r.\]
By evaluating the geometric sum we obtain
\begin{proposition}
The expected number of distinct pairs consisting of two equal letters in geometric words of length $n$ is
\[\sum_{r=1}^n \sum_{t=0}^r (-1)^{r-t+1} {n-t \choose n-r}{n-t+1 \choose t} \frac{p^r}{1-q^r}.\]
\end{proposition}

\section{Asymptotics for the number of distinct pairs with equal adjacent letters}\label{Asympequalletters}

In this section we determine an asymptotic expression for the average number of distinct pairs with equal adjacent letters for words of length $n$ as $n \to \infty$. We have the following result:
\begin{theorem}\label{Th2}
Let $q \in (0,1)$ be fixed. The average number of distinct pairs that consist of adjacent equal letters is equal to
\[\frac 12\log_{1/q}n+\frac 12-\frac{\gamma+2 \log p}{2 \log q}+\frac{1}{2 \log q}\sum_{k \in \mathbb{Z} \setminus \{0\}} \Gamma \Big(\frac{k\pi i}{\log q}\Big)e^{k \pi i \log_{1/q} (p^2 n)} + o(1)\]
as $n \to \infty$, where $\gamma$ is the Euler-Mascheroni constant and $\Gamma$ denotes the gamma function. The infinite sum is the Fourier series of a periodic function in $\log_{1/q} n$ with period $2$.
\end{theorem}

\begin{proof}
We use the same expressions for $a_i$ and $b_i$ as in the previous sections, i.e., $a_i:=pq^{i-1}$ and $b_i :=\sqrt{1+2a_i-3a_i^2}$. Observe that $a_i \to 0$ and $b_i \to 1$ as $i \to \infty$.
Recall from~\eqref{Sn1} that the average in question is equal to
\[S(n) = \sum_{i=1}^\infty \left( 1+\frac{(1-a_i-b_i)^n(1+a_i-b_i)-(1-a_i+b_i)^n(1+a_i+b_i)}{b_i 2^{n+1}}\right).\]
Our analysis of this sum proceeds in three steps:

\medskip

\textbf{Step 1.} We have $1-a_i-b_i \to 0$ as $i \to \infty$, rendering the terms involving $(1-a_i-b_i)^n$ asymptotically negligible. To make this precise, note first that $-2a \leq 1 - a - \sqrt{1+2a-3a^2} \leq 0$ for every $a \in [0,1]$ (as one easily verifies). It follows that
\[|1 - a_i - b_i| = \Big| 1 - a_i - \sqrt{1+2a_i-3a_i^2} \Big| \leq |2a_i|,\]
hence
\[\sum_{i=1}^\infty \frac{(1-a_i-b_i)^n(1+a_i-b_i)}{b_i 2^{n+1}} = O \Big( \sum_{i=1}^{\infty} a_i^n \Big).\]
The geometric sum evaluates to
\[\sum_{i=1}^{\infty} a_i^n = \frac{p^n}{1-q^n},\]
which goes to $0$ at an exponential rate. We conclude that the terms involving $(1-a_i-b_i)^n$ are indeed exponentially small and can be ignored in the following.

\medskip

\textbf{Step 2.} Next we approximate the remaining terms by simpler expressions. To this end, we will split the sum in an appropriate way. Observe first that $\frac{1-a+\sqrt{1+2a-3a^2}}{2}$ is a decreasing function of $a$ for $a \in [0,1]$, which implies that
\[\frac{1-a_1+b_1}{2} \leq \frac{1-a_2+b_2}{2} \leq \cdots \leq 1.\]
Next we note that
\[\frac{1 - a + \sqrt{1+2a-3a^2}}{2} = 1 - a^2 + O(a^3)\]
as $a \to 0$, hence
\[\frac{1-a_i+b_i}{2} = 1 - a_i^2 + O(a_i^3).\]
Moreover,
\[\frac{1 + a + \sqrt{1+2a-3a^2}}{2\sqrt{1+2a-3a^2}} = 1 + O(a^2),\]
and thus
\[\frac{1+a_i+b_i}{2b_i} = 1 + O(a_i^2).\]
Now we split the sum
\[\sum_{i=1}^{\infty} \Big( 1-  \frac{(1-a_i+b_i)^n(1+a_i+b_i)}{b_i 2^{n+1}}\Big) = \sum_{i=1}^{\infty} \Big( 1 - \frac{1+a_i+b_i}{2b_i} \Big( \frac{1-a_i+b_i}{2} \Big)^n \Big)\]
into the part where $i \leq i_0 = \lfloor \frac25 \log_{1/q} n \rfloor$ and the rest where $i > i_0$. The choice is made so that $n a_{i_0}^2 \to \infty$, while $n a_{i_0}^3 \to 0$ as $n \to \infty$. For $i > i_0$, we have
\begin{align*}
\frac{1+a_i+b_i}{2b_i} \Big( \frac{1-a_i+b_i}{2} \Big)^n &= \big( 1 + O(a_i^2) \big) \big( 1 - a_i^2 + O(a_i^3) \big)^n
\\
&= \exp \big( {- a_i^2 n} + O(a_i^3 n + a_i^2) \big) \\
&= \exp \big( {- a_i^2 n} \big) \big( 1 + O(a_i^3 n + a_i^2) \big).
\end{align*}
Hence
\[\sum_{i > i_0} \Big( 1 - \frac{1+a_i+b_i}{2b_i} \Big( \frac{1-a_i+b_i}{2} \Big)^n \Big) =
\sum_{i > i_0} \Big( 1 - \exp \big( {-a_i^2 n} \big) \Big) + O \Big( \sum_{i > i_0} (a_i^3 n + a_i^2) \Big).\]
Evaluating the geometric sums in the $O$-term, we find that it is $O( n^{-1/5})$. 

On the other hand, for $i \leq i_0$,
\begin{align*}
\frac{1-a_i+b_i}{2} &\leq \frac{1-a_{i_0}+b_{i_0}}{2} = 1 - a_{i_0}^2 + O(a_{i_0}^3) \\
&= 1 - \frac{p^2}{q^2} q^{2 \lfloor \frac25 \log_{1/q} n \rfloor} + O\big(q^{3 \cdot \frac25 \log_{1/q} n}\big) \\
&\leq 1 - \frac{p^2}{q^2} n^{-4/5} + O\big(n^{-6/5}\big),
\end{align*}
thus
\[\Big( \frac{1-a_i+b_i}{2} \Big)^n = O \Big( \exp \Big( - \frac{p^2}{q^2} n^{1/5} \Big) \Big),\]
which means that the sum
\[\sum_{i \leq i_0} \frac{1+a_i+b_i}{2b_i} \Big( \frac{1-a_i+b_i}{2} \Big)^n \]
goes to $0$ faster than any power of $n$ and is thus negligible. We replace it by
\[\sum_{i \leq i_0} \exp \big({-a_i^2 n} \big),\]
which is equally negligible by the same reasoning. In conclusion, we have found that
\[\sum_{i=1}^{\infty} \Big( 1-  \frac{(1-a_i+b_i)^n(1+a_i+b_i)}{b_i 2^{n+1}}\Big) = \sum_{i=1}^{\infty} \Big( 1 - \exp \big({-a_i^2 n} \big) \Big) + O\big(n^{-1/5}\big),\]
or combined with Step 1, that
\[S(n) = \sum_{i=1}^{\infty} \Big( 1 - \exp \big({-a_i^2 n}\big) \Big) + O\big(n^{-1/5}\big).\]

\medskip

\textbf{Step 3.} We are now left with the sum
\[\sum_{i=1}^{\infty} \Big( 1 - \exp \big({-a_i^2 n}\big) \Big)  = \sum_{i=1}^\infty \Big(1- \exp \big( {-p^2q^{2i-2} n} \big)\Big),\]
which is a special case of the harmonic sum
\[\sum_{i=0}^{\infty} \left(1 - e^{- b^{-i} x} \right).\]
The asymptotic behaviour of this harmonic sum as $x \to \infty$ is well known. The formula
\[\sum_{i=0}^{\infty} \left(1 - e^{- b^{-i} x} \right)
= \log_b x + \frac{\gamma}{\log b} + \frac12 - \frac{1}{\log b} \sum_{k \in \setminus \{0\}} \Gamma \Big( {- \frac{2k \pi i}{\log b}} \Big) e^{2k\pi i \log_b x} + O \Big( \frac1x \Big)
\]
is given for example in \cite{PW}, see also \cite{FGD}. Here, we can apply this formula with $b = \frac1{q^2}$ and $x = p^2 n$. Combining all contributions now gives us the formula in Theorem~\ref{Th2}.

\end{proof}

\section{Counting pairs arising from adjacent non-equal letters}\label{CountNonEqual}
Here we find the average number of pairs arising from adjacent non-equal letters in words of length $n$. We have
\begin{proposition}\label{prop4}
The expected number of distinct pairs consisting of two non-equal letters in geometric words of length $n$ is
\[T(n)=2\sum_{k=1}^{\lfloor n/2 \rfloor} \frac{(-1)^{k-1}p^{2k}q^k{n-k \choose k}}{(1-q^k)^2(1+q^k)}.\]
\end{proposition}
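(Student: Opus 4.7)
The plan is to start from equation~(\ref{at least 1 ij}) and the relation
\[T_n = \sum_{i \neq j} P_{i,j}(n) = [z^n] \sum_{i \neq j} \left(\frac{1}{1-z} - \frac{1}{1-z+cz^2}\right),\]
where $c = p^2 q^{i+j-2}$. Rather than working through the closed form of Proposition~\ref{prop2} directly (which would force us to sum messy powers of $(1\pm d)/2$ over the lattice), I would extract coefficients at the level of the rational generating function, so that the dependence on $i,j$ enters only polynomially through $c^k$.

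The key algebraic move is to rewrite
\[\frac{1}{1-z+cz^2} = \frac{1}{1-z(1-cz)} = \sum_{s=0}^{\infty} z^s(1-cz)^s = \sum_{s=0}^{\infty}\sum_{k=0}^{s}\binom{s}{k}(-c)^k z^{s+k},\]
and read off
\[[z^n]\,\frac{1}{1-z+cz^2} = \sum_{k=0}^{\lfloor n/2\rfloor} \binom{n-k}{k}(-c)^k.\]
Subtracting from $[z^n](1-z)^{-1}=1$ and noting that the $k=0$ term cancels, we get
\[[z^n]\left(\frac{1}{1-z} - \frac{1}{1-z+cz^2}\right) = \sum_{k=1}^{\lfloor n/2\rfloor} (-1)^{k-1}\binom{n-k}{k}c^k.\]

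It then remains to sum over $i \neq j$. Pulling the sum inside and substituting $c^k = p^{2k}q^{k(i+j-2)}$, the inner sum factors as
\[\sum_{i \neq j \geq 1} q^{k(i+j-2)} = \Bigl(\sum_{i\geq 1} q^{k(i-1)}\Bigr)^{\!2} - \sum_{i\geq 1} q^{2k(i-1)} = \frac{1}{(1-q^k)^2} - \frac{1}{1-q^{2k}}.\]
A direct simplification using $1-q^{2k}=(1-q^k)(1+q^k)$ gives $\dfrac{2q^k}{(1-q^k)^2(1+q^k)}$, and assembling the pieces yields exactly the stated formula for $T_n$.

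The only slightly delicate step is the double geometric evaluation with the diagonal $i=j$ removed, but it is routine once the expansion above is in place; the main conceptual point is to avoid the irrational form from Proposition~\ref{prop2} by going back one step to the rational generating function in (\ref{at least 1 ij}) before extracting coefficients.
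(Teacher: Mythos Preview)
Your proof is correct and follows essentially the same route as the paper: both expand $\frac{1}{1-z+cz^2}=\sum_{s\ge 0}z^s(1-cz)^s$, extract the coefficient of $z^n$ as a finite binomial sum in powers of $c$, and then evaluate the geometric double sum over $i\neq j$. The only cosmetic differences are that you index by $k$ from the outset (the paper uses $s$ and substitutes $k=n-s$ at the end) and that you compute $\sum_{i\neq j}q^{k(i+j-2)}$ as the full square minus the diagonal, whereas the paper writes it as twice the sum over $j>i$; both lead to the same closed form $\dfrac{2q^k}{(1-q^k)^2(1+q^k)}$.
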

\begin{proof} From Theorem~\ref{Th1}, the expected number of distinct adjacent unequal letters is given by
\begin{align*}
&[z^n]\sum_{i=1}^\infty \sum_{j \neq i} \left(\frac{1}{1-z} - \frac{1}{1-z+pq^{i-1}pq^{j-1}z^2}\right),
\end{align*}
where
\begin{align*}
&[z^n]\frac{1}{1-z+pq^{i-1}pq^{j-1}z^2}=[z^n]\sum_{s=0}^\infty z^s(1-pq^{i-1}pq^{j-1}z)^s\\
&=\sum_{s=0}^\infty \sum_{k=0}^s [z^{n-s}] {s \choose k}(-p^2q^{i+j-2}z)^k=\sum_{s=0}^n {s \choose n-s} (-p^2q^{i+j-2})^{n-s}.
\end{align*}
Thus summing on $i$
\begin{align*}
&[z^n]\sum_{i=1}^\infty \sum_{j \neq i} \left(\frac{1}{1-z} - \frac{1}{1-z+pq^{i-1}pq^{j-1}z^2}\right)\\
&=\sum_{i=1}^\infty \sum_{j \neq i}\left[1-\sum_{s=0}^n {s \choose n-s} (-p^2q^{i+j-2})^{n-s}\right]\\
&=\sum_{i=1}^\infty \sum_{j \neq i}\sum_{s=0}^{n-1} {s \choose n-s}(-1)^{n-s+1} (p^2q^{i+j-2})^{n-s}\\
&=\sum_{s=0}^{n-1} \left(\frac{p^2}{q^2}\right)^{n-s}{s \choose n-s} (-1)^{n-s+1} \sum_{i=1}^\infty \left[\sum_{j=1}^{i-1} (q^{i+j})^{n-s}+\sum_{j=i+1}^\infty (q^{i+j})^{n-s}\right]\\
&=2\sum_{s=0}^{n-1} \left(\frac{p^2}{q^2}\right)^{n-s}{s \choose n-s} (-1)^{n-s+1} \sum_{i=1}^\infty \sum_{j=i+1}^\infty (q^{i+j})^{n-s},
\end{align*}
where $\sum_{i=1}^\infty \sum_{j \neq i}=2\sum_{i=1}^\infty \sum_{j>i}$. So finally
\begin{align*}
&[z^n]\sum_{i=1}^\infty \sum_{j \neq i} \left(\frac{1}{1-z} - \frac{1}{1-z+pq^{i-1}pq^{j-1}z^2}\right)\\
&=2\sum_{s=0}^{n-1} {s \choose n-s} (-1)^{n-s+1} \left(\frac{p^2}{q^2}\right)^{n-s}\frac{1}{1-q^{n-s}}\sum_{i=1}^\infty (q^{n-s})^{2i+1}\\
&=2\sum_{s=0}^{n-1} {s \choose n-s} (-1)^{n-s+1} \left(\frac{p^2}{q^2}\right)^{n-s}\frac{q^{3(n-s)}}{(1-q^{n-s})(1-q^{2(n-s)})}.
\end{align*}
Now, by replacing $s$ by $n-k$ we have the required result
\[T(n)=2\sum_{k=1}^{\lfloor n/2 \rfloor} \frac{(-1)^{k-1}p^{2k}q^k{n-k \choose k}}{(1-q^k)^2(1+q^k)}.\]
\end{proof}
\section{Asymptotics for the number of distinct pairs with non-equal adjacent letters}

We finally also consider the asymptotic behaviour of the sum $T(n)$ for which an explicit formula was derived in the previous section. Specifically, we prove the following result:
\begin{theorem}\label{Th3}
Let $q \in (0,1)$ be fixed. The average number of distinct pairs with non-equal adjacent letters is equal to
\begin{align*}
&\frac{(\log_{1/q} n)^2}2 - \Big( \frac{\gamma + 2\log p}{\log q} - \frac12 \Big) \log_{1/q} n + \frac{6\gamma^2 + \pi^2 + 24\gamma \log p + 24(\log p)^2}{12 (\log q)^2} - \frac{\gamma + 2\log p}{2\log q} - \frac1{12} \\
&+ \sum_{k \in \mathbb{Z} \setminus \{0\}} \Big({\frac{\Gamma(\chi_k)\log_{1/q} n}{\log q}} - \frac{\Gamma(\chi_k)(4\log p - \log q) - 2\Gamma'(\chi_k)}{2(\log q)^2} \Big) e^{2k \pi i \log_{1/q}(p^2 n)} \\ 
&- \sum_{m \in \mathbb{Z}} \frac{\Gamma(\hat{\chi}_m)}{2\log q} e^{(2m+1) \pi i \log_{1/q}(p^2 n)} + o(1)
\end{align*}
as $n \to \infty$, where $\chi_k = \frac{2k \pi i}{\log q}$ and $\hat{\chi}_m = \frac{(2m+1)\pi i}{\log q}$. 
\end{theorem}
We remark that the infinite sums can be interpreted as Fourier series again. See also Corollary~\ref{combined}.

\begin{proof}
We start with the expression for $P_{i,j}(n)$ obtained from Proposition \ref{prop2} in Section \ref{ExpNum}:
\[P_{i,j}(n) = 1+\frac{1-d_{ij}}{2d_{ij}}\left( \frac{2c_{ij}}{1+d_{ij}}\right)^n-\frac{1+d_{ij}}{2d_{ij}}\left( \frac{2c_{ij}}{1-d_{ij}}\right)^n,\]
where $c_{ij} = p^2 q^{i+j-2}$ and $d_{ij} = \sqrt{1-4c_{ij}}$. Note that $c_{ij} \to 0$ and $d_{ij} \to 1$ as $i+j \to \infty$. By symmetry, $P_{i,j}(n) = P_{j,i}(n)$, so the total contribution is
\[T(n) = 2 \sum_{i =1}^{\infty} \sum_{j=i+1}^{\infty} P_{i,j}(n) = 2 \sum_{i =1}^{\infty} \sum_{j=i+1}^{\infty} 
\Big( 1+\frac{1-d_{ij}}{2d_{ij}}\left( \frac{2c_{ij}}{1+d_{ij}}\right)^n-\frac{1+d_{ij}}{2d_{ij}}\left( \frac{2c_{ij}}{1-d_{ij}}\right)^n \Big).\]
To analyze this sum, we follow the same three steps as in the proof of Theorem~\ref{Th2}, focussing mainly on the third step, which requires additional work in this case.

\medskip

\textbf{Step 1.} Since $\frac{2c_{ij}}{1+d_{ij}} \leq 2c_{ij}$, the term involving $( \frac{2c_{ij}}{1+d_{ij}} )^n$ can be bounded as follows:
\[\sum_{i =1}^{\infty} \sum_{j=i+1}^{\infty}  \frac{1-d_{ij}}{2d_{ij}}\left( \frac{2c_{ij}}{1+d_{ij}}\right)^n = O \Big( \sum_{i =1}^{\infty} \sum_{j=i+1}^{\infty} (2c_{ij})^n \Big).\]
Since the sum evaluates to
\[\sum_{i =1}^{\infty} \sum_{j=i+1}^{\infty} (2c_{ij})^n = \frac{(2p^2 q)^n}{(1-q^n)(1-q^{2n})}\]
and $2p^2 q \leq \frac8{27}$ for all possible values of $p$ and $q=1-p$, this part is exponentially small and thus negligible.

\medskip

\textbf{Step 2.} The procedure is exactly the same as in the proof of Theorem~\ref{Th2}: we observe that
\[\frac{1+\sqrt{1-4c}}{2\sqrt{1-4c}} = 1 + O(c)\]
as $c \to 0$, thus
\[\frac{1+d_{ij}}{2d_{ij}} = \frac{1+\sqrt{1-4c_{ij}}}{2\sqrt{1-4c_{ij}}} = 1 + O(c_{ij}).\]
Likewise,
\[\frac{2c}{1-\sqrt{1-4c}} = \frac{1+\sqrt{1-4c}}{2} = 1 - c + O(c^2),\]
thus
\[\frac{2c_{ij}}{1-d_{ij}} = 1 - c_{ij} + O(c_{ij}^2).\]
Splitting the sum in a similar way as in the proof of Theorem~\ref{Th2}, namely into the part where $i+j \leq \frac23 \log_{1/q} n$ and the rest, and approximating by means of these Taylor expansions, we find that
\[\sum_{i =1}^{\infty} \sum_{j=i+1}^{\infty} 
\Big( 1-\frac{1+d_{ij}}{2d_{ij}}\left( \frac{2c_{ij}}{1-d_{ij}}\right)^n \Big) = \sum_{i =1}^{\infty} \sum_{j=i+1}^{\infty} \Big( 1 -  \exp \big({- c_{ij} n} \big) \Big)  + o(1).\]

\textbf{Step 3.} The third step is more involved, as we cannot simply plug into a formula taken from the literature. However, we can use a classical technique involving the Mellin transform to analyze the remaining sum. The method is excellently described in \cite{FGD}. From the first two steps, we know that
\[T(n) = 2 \sum_{i=1}^{\infty} \sum_{j=i+1}^{\infty} \Big( 1 -  \exp \big( {- c_{ij} n} \big) \Big)  + o(1).\]

Hence we are now interested in the asymptotic behaviour of the function
\[f(t) = \sum_{i=1}^{\infty} \sum_{j=i+1}^{\infty} \Big( 1 -  \exp \big( {- p^2 q^{i+j-2} t} \big) \Big)\]
as $t \to \infty$. The Mellin transform of this function is given by
\[F(s) = \int_0^{\infty} f(t) t^{s-1}\,dt = -\Gamma(s) \sum_{i=1}^{\infty} \sum_{j=i+1}^{\infty} (p^2 q^{i+j-2})^{-s}
= - \frac{\Gamma(s)}{p^{2s}q^s (1-q^{-s})(1-q^{-2s})}\]
for $s$ in the fundamental strip $\{s \in \mathbb{C}\,:\, -1 < \operatorname{Re} s < 0\}$, in view of the scaling rule
\[\int_0^{\infty} f(a t) t^{s-1}\,dt = a^{-s} \int_0^{\infty} f(t) t^{s-1}\,dt\]
and the fact that
\[\int_0^{\infty} (1-e^{-t})t^{s-1} \,dt = - \Gamma(s)\]
for $\operatorname{Re}(s) \in (-1,0)$.

We can now apply the following general theorem, see \cite[Theorem 4, Corollary 1]{FGD}:

\begin{theorem}\label{FGD-theorem}
Let $f(t)$ be a continuous function on $(0,\infty)$ whose Mellin transform $F(s)$ has the nonempty fundamental strip $\{s \in \mathbb{C}\,:\, \alpha < \operatorname{Re}(s) < \beta\}$. Assume further that $F(s)$ admits a meromorphic continuation to the strip $\{s \in \mathbb{C}\,:\, \alpha < \operatorname{Re}(s) < \gamma\}$ for some $\gamma > \beta$, and that it is analytic on the line $\operatorname{Re}(s) = \gamma$. Moreover, suppose that $F(s)$ satisfies the growth condition
\[F(s) = O(|s|^{-r})\]
for some $r > 1$ uniformly as $|s| \to \infty$, $\eta \leq \operatorname{Re}(s) \leq \gamma$ for some fixed $\eta \in (\alpha,\beta)$ and $|\operatorname{Im}(s)|$ belongs to a denumerable set of values $T_j$ ($T_j \to \infty$).

Then we have
\[f(t) = - \sum_{\xi} \operatornamewithlimits{Res}_{s = \xi} (F(s)t^{-s}) + O(t^{-\gamma})\]
as $t \to \infty$, where the sum is over all poles $\xi$ of $F(s)$ with $\beta \leq \operatorname{Re}(s) \leq \gamma$. 
\end{theorem}

In our case, the poles of the Mellin transform $F(s)$ are located at $0$ (triple pole), the negative integers (which is not relevant for us, since we are only considering those to the right of the fundamental strip) and along the vertical axis $\operatorname{Re}(s) = 0$ at all those values where the denominator vanishes. There are two types: the poles of the form $\chi_k = \frac{2k\pi i}{\log q}$ ($k \in \mathbb{Z} \setminus \{0\}$), where we have a double pole as both $1-q^{-s}$ and $1-q^{-2s}$ vanish, and those of the form $\hat{\chi}_m = \frac{(2m+1)\pi i}{\log q}$ ($m \in \mathbb{Z}$), where we only have a single pole.

Note also that the technical conditions of Theorem~\ref{FGD-theorem} are satisfied: we have $\alpha = -1$, $\beta = 0$, and we can take e.g. $\gamma = 1$ and $\eta = -\frac12$. The fraction $\frac{1}{p^{2s}q^s(1-q^{-s})(1-q^{-2s})}$ remains bounded for $\eta \leq \operatorname{Re}(s) \leq \gamma$ if we stick to horizontal segments of the form $\operatorname{Im}(s) = \frac{(j+1/2)\pi}{\log q}$ with $j \in \mathbb{Z}$, and the gamma function $\Gamma(s)$ is well known to decay at an exponential rate as $|\operatorname{Im}(s)| \to \infty$. Thus the growth condition is satisfied.

It remains to compute the residues at all the aforementioned poles: we first have, with $L = \log_{1/q} t$,
\[\operatornamewithlimits{Res}_{s=0} F(s) t^{-s} = - \frac{L^2}4 + \Big( \frac{\gamma + 2\log p}{2\log q} - \frac14 \Big) L - \frac{6\gamma^2 + \pi^2 + 24\gamma \log p + 24(\log p)^2}{24 (\log q)^2} + \frac{\gamma + 2\log p}{4\log q} + \frac1{24}\]
at the triple pole $0$. Next, for $s = \chi_k = \frac{2k\pi i}{\log q}$,
\[\operatornamewithlimits{Res}_{s=\chi_k} F(s) t^{-s} = \Big({- \frac{\Gamma(\chi_k)L}{2\log q}} + \frac{\Gamma(\chi_k)(4\log p - \log q) - 2\Gamma'(\chi_k)}{4(\log q)^2} \Big) \exp \Big( 2k \pi i L - \frac{4 k\pi i \log p}{\log q} \Big) .\]
Finally, for $s = \hat{\chi}_m = \frac{(2m+1)\pi i}{\log q}$,
\[\operatornamewithlimits{Res}_{s=\hat{\chi}_m} F(s) t^{-s} = \frac{\Gamma(\hat{\chi}_m)}{4\log q} \exp
\Big( (2m+1) \pi i L - \frac{(4m+2)\pi i \log p}{\log q} \Big)\]
Thus Theorem~\ref{FGD-theorem} yields
\begin{align*}
f(t) &= \frac{L^2}4 - \Big( \frac{\gamma + 2\log p}{2\log q} - \frac14 \Big) L + \frac{6\gamma^2 + \pi^2 + 24\gamma \log p + 24(\log p)^2}{24 (\log q)^2} - \frac{\gamma + 2\log p}{4\log q} - \frac1{24} \\
&\quad+ \sum_{k \in \mathbb{Z} \setminus \{0\}} \Big({\frac{\Gamma(\chi_k)L}{2\log q}} - \frac{\Gamma(\chi_k)(4\log p - \log q) - 2\Gamma'(\chi_k)}{4(\log q)^2} \Big) \exp \Big( 2k \pi i L - \frac{4 k\pi i \log p}{\log q} \Big) \\ 
&\quad- \sum_{m \in \mathbb{Z}} \frac{\Gamma(\hat{\chi}_m)}{4\log q} \exp
\Big( (2m+1) \pi i L - \frac{(4m+2)\pi i \log p}{\log q} \Big) + O \Big( \frac1{t} \Big)
\end{align*}
as $t \to \infty$. Since we proved earlier that
\[T(n) = 2f(n) + o(1),\]
we obtain exactly the stated formula.
\end{proof}

Combining the contributions from patterns of the form $ii$ and those of the form $ij$, we obtain the following corollary:

\begin{corollary}\label{combined}
The average total number of distinct adjacent pairs in geometrically distributed words of length $n$ is
\[S(n) + T(n) = \frac12 (\log_{1/q} n)^2 + \Psi_1(\log_{1/q} n) \log_{1/q} n + \Psi_2(\log_{1/q} n) + o(1)\]
for certain periodic functions $\Psi_1$ and $\Psi_2$ of period $2$, which can be obtained by combining the terms of Theorem~\ref{Th2} and Theorem~\ref{Th3}.
\end{corollary}

\nocite{*}
\bibliographystyle{abbrvnat}
\bibliography{sample-dmtcs}

\begin{thebibliography}{23}
\providecommand{\natexlab}[1]{#1}
\providecommand{\url}[1]{\texttt{#1}}
\expandafter\ifx\csname urlstyle\endcsname\relax
  \providecommand{\doi}[1]{doi: #1}\else
  \providecommand{\doi}{doi: \begingroup \urlstyle{rm}\Url}\fi

\bibitem[Archibald et~al.(2006)Archibald, Knopfmacher, and Prodinger]{akp06}
M.~Archibald, A.~Knopfmacher, and H.~Prodinger.
\newblock The number of distinct values in a geometrically distributed sample.
\newblock \emph{European Journal of Combinatorics}, 27:\penalty0 1059--1081,
  2006.

\bibitem[Archibald et~al.(2017)Archibald, Blecher, Brennan, Knopfmacher, and
  Prodinger]{ABBKP}
M.~Archibald, A.~Blecher, C.~Brennan, A.~Knopfmacher, and H.~Prodinger.
\newblock Geometric random variables: Descents following maxima.
\newblock \emph{Statistics and Probability Letters}, 124:\penalty0 140--147,
  2017.

\bibitem[Bassino et~al.(2012)Bassino, Cl\'{e}ment, and Nicod\`{e}me]{BCN}
F.~Bassino, J.~Cl\'{e}ment, and P.~Nicod\`{e}me.
\newblock Counting occurrences for a finite set of words.
\newblock \emph{ACM Transactions on Algorithms}, 8:3\penalty0 (31):\penalty0
  2375--2395, 2012.

\bibitem[Bassino et~al.(2013)Bassino, Cl\'{e}ment, Seroussi, and Viola]{BCSV}
F.~Bassino, J.~Cl\'{e}ment, G.~Seroussi, and A.~Viola.
\newblock Optimal prefix codes for pairs of geometrically distributed random
  variables.
\newblock \emph{IEEE Transactions on Information Theory}, 59\penalty0
  (4):\penalty0 2375--2395, 2013.
\newblock \doi{10.1109/TIT.2012.2236915}.

\bibitem[Cristea and Prodinger(2013)]{CP}
L.-L. Cristea and H.~Prodinger.
\newblock The visibility parameter for words and permutations.
\newblock \emph{Central European Journal of Mathematics}, 11:\penalty0
  283--295, 2013.

\bibitem[Flajolet and Sedgewick(2009)]{FlajSedge09}
P.~Flajolet and R.~Sedgewick.
\newblock \emph{Analytic Combinatorics}.
\newblock Cambridge University Press, 2009.

\bibitem[Flajolet et~al.(1995)Flajolet, Gourdon, and Dumas]{FGD}
P.~Flajolet, X.~Gourdon, and P.~Dumas.
\newblock Mellin transforms and asymptotics: harmonic sums.
\newblock \emph{Theoretical Computer Science}, 144\penalty0 (1--2):\penalty0
  3--58, 1995.

\bibitem[Fuchs and Prodinger(2013)]{FP}
M.~Fuchs and H.~Prodinger.
\newblock Words with a generalized restricted growth property.
\newblock \emph{Indagationes Mathematicae (special issue in memory of N.~G.~de
  Bruijn)}, 24\penalty0 (4):\penalty0 124--133, 2013.

\bibitem[Golomb(1966)]{gulomb}
S.~Golomb.
\newblock Run-length encodings (corresp.).
\newblock \emph{IEEE Transactions on Information Theory}, 12\penalty0
  (3):\penalty0 399--401, 1966.
\newblock \doi{10.1109/TIT.1966.1053907}.

\bibitem[Guibas and Odlyzko(1981)]{guod}
L.~Guibas and A.~Odlyzko.
\newblock String overlaps, pattern matching, and non-transitive games.
\newblock \emph{Journal of Combinatorial Theory A}, 30:\penalty0 183--208,
  1981.

\bibitem[Kalpathy and Ward(2014)]{KW}
R.~Kalpathy and M.~Ward.
\newblock On a leader election algorithm: Truncated geometric case study.
\newblock \emph{Statistics and Probability Letters}, 87:\penalty0 40--47, 2014.

\bibitem[Kirschenhofer and Prodinger(1996)]{KP96}
P.~Kirschenhofer and H.~Prodinger.
\newblock The number of winners in a discrete geometrically distributed sample.
\newblock \emph{Annals of Applied Probability}, 6:\penalty0 687--694, 1996.

\bibitem[Louchard(2015)]{L}
G.~Louchard.
\newblock The asymmetric leader election algorithm: number of survivors near
  the end of the game.
\newblock \emph{Quaestiones Mathematicae}, pages 1--19, 2015.

\bibitem[Louchard and Prodinger(2008)]{lopr06}
G.~Louchard and H.~Prodinger.
\newblock On gaps and unoccupied urns in sequences of geometrically distributed
  random variables.
\newblock \emph{Discrete Mathematics}, 308:\penalty0 1538--1562, 2008.

\bibitem[Louchard and Ward(2015)]{LW}
G.~Louchard and M.~Ward.
\newblock The truncated geometric election algorithm : Duration of the
  election.
\newblock \emph{Statistics and Probability Letters}, 101:\penalty0 40--48,
  2015.

\bibitem[Louchard et~al.(2005)Louchard, Prodinger, and Ward]{LPW}
G.~Louchard, H.~Prodinger, and M.~D. Ward.
\newblock The number of distinct values of some multiplicity in sequences of
  geometrically distributed random variables.
\newblock \emph{In 2005 International Conference on Analysis of Algorithms,
  Discrete Mathematics and Theoretical Computer Science}, pages 231--256, 2005.

\bibitem[Merhav et~al.(2000)Merhav, Seroussi, and Weinberger]{MSW}
N.~Merhav, G.~Seroussi, and M.~J. Weinberger.
\newblock Optimal prefix codes for sources with two-sided geometric
  distributions.
\newblock \emph{IEEE Transactions on Information Theory}, 46\penalty0
  (1):\penalty0 121--135, 2000.
\newblock \doi{10.1109/18.817513}.

\bibitem[Prodinger(1996)]{Prod96}
H.~Prodinger.
\newblock Combinatorics of geometrically distributed random variables:
  Left-to-right maxima.
\newblock \emph{Discrete Mathematics}, 153:\penalty0 253--270, 1996.

\bibitem[Prodinger(2014)]{P}
H.~Prodinger.
\newblock The box parameter for words and permutations.
\newblock \emph{Central European Journal of Mathematics}, 12\penalty0
  (1):\penalty0 167--174, 2014.

\bibitem[Prodinger and Wagner(2015)]{PW}
H.~Prodinger and S.~Wagner.
\newblock Bootstrapping and double-exponential limit laws.
\newblock \emph{Discrete Mathematics and Theoretical Computer Science},
  17\penalty0 (1):\penalty0 123--144, 2015.

\bibitem[Stanley(1986)]{Stan}
R.~Stanley.
\newblock \emph{Enumerative combinatorics}.
\newblock Wadsworth and Brooks/Cole, Monterey, California, 1986.

\bibitem[Szpankowski(2001)]{S01}
W.~Szpankowski.
\newblock \emph{Average Case Analysis of Algorithms on Sequences}.
\newblock John Wiley and Sons, New York, 2001.

\bibitem[Weinberger et~al.(1999)Weinberger, Seroussi, and Sapiro]{WSS}
M.~J. Weinberger, G.~Seroussi, and G.~Sapiro.
\newblock From logo-i to the jpeg-ls standard.
\newblock \emph{Proceedings 1999 International Conference on Image Processing
  (Cat. 99CH36348), Kobe}, 4:\penalty0 68--72, 1999.
\newblock \doi{10.1109/ICIP.1999.819521}.

\end{thebibliography}
\label{sec:biblio}

\end{document}